\theoremstyle{thmstyleone}%
\newtheorem{theorem}{Theorem}
\newtheorem{proposition}[theorem]{Proposition}%
\theoremstyle{thmstyletwo}%
\newtheorem{example}{Example}%
\newtheorem{remark}{Remark}%
\theoremstyle{thmstylethree}%
\newtheorem{definition}{Definition}%
\begin{document}
	
	\title[Some Sharp bounds for the generalized Davis--Wielandt radius]{Some Sharp bounds for the generalized Davis--Wielandt radius}
	
	
	\author*[1]{\fnm{Mehdi} \sur{Naimi}}\email{mehdi.naimi@univ-mosta.dz}
	
	\author[2]{\fnm{Mohammed} \sur{Benharrat}}\email{mohammed.benharrat@enp-oran.dz}

	\author[1]{\fnm{Faouzi} \sur{Hireche}}\email{faouzi.hireche@univ-mosta.dz}

	\affil[1]{ \orgname{Universit\'e  Abdelhamid Ibn Badis de Mostaganem}, \orgaddress{\postcode{27000}, \state{Mostaganem}, \country{Alg\'erie}}}
	
	\affil[2]{ \orgname{Ecole Nationale Polytechnique d'Oran-Maurice Audin (Ex. ENSET d'Oran)}, \orgaddress{\street{BP 1523 Oran-El M'naouar},  \postcode{ 31000}, \state{Oran}, \country{Alg\'{e}rie}}}

	
	\abstract{This paper presents a study of the generalized Davis--Wielandt radius of Hilbert space operators. New sharp lower bounds and improvements for both the generalized Davis--Wielandt radius and the numerical radius are established, supported by illustrative examples. In addition, an alternative form of the triangle inequality for operators is derived.}

	\keywords{Davis--Wielandt radius, Numerical radius, Operator norm.}
	
	\pacs[MSC Classification]{Primary 47A12, Secondary 47A30.}
	
	\maketitle
	\section{Introduction}

	Let $\mathcal{H}$ be a complex Hilbert  space with inner product $\langle \cdot,\cdot \rangle$ and the corresponding norm $ \Vert \cdot \Vert$,  and let $B(\mathcal{H})$ denote the $C^*$-algebra of bounded linear operators on $\mathcal{H}$. For $T \in B(\mathcal{H})$, let $T^*$ be its adjoint and set $|T| = (T^*T)^{1/2}$, $|T^*| = (TT^*)^{1/2}$.
	The numerical range and the numerical radius  of $T$ are  defined by
	$$W(T) :=\lbrace \langle Tx,x\rangle : x \in \mathcal{H}, \Vert x \Vert  = 1\rbrace $$
	and
	$$ w(T):=\sup\lbrace \vert z\vert : z \in W(T)\rbrace   $$
	respectively. It is well known that $w(\cdot)$ defines a norm on $B(\mathcal{H} )$, which is equivalent to the usual  operator norm, more precisely, we have  
	\begin{equation}\label{norm.equiv}
		\frac{1}{2}\Vert T\Vert \leq w(T) \leq \Vert T\Vert
	\end{equation}
	for any $T\in   B(\mathcal{H} )$. We refer the reader to \cite{Abu,Bak,Kar,Kit1,Yam,BhuBook} for further properties and
	bounds of the numerical radius. A nice definition of  $w(\cdot)$, is given by Yamazaki in \cite{Yam} as follows
	\begin{equation}\label{eq1Yam}
		w(T)= \sup_{\theta \in \mathbb{R}}\left\| \Re (e^{i\theta}T) \right\|,
	\end{equation}
	for any $T\in B(\mathcal{H})$, where $\Re(T)= \frac{1}{2}(T +T^*)$ and $\Im(T)=\frac{1}{2i}(T -T^*)$ are the real  and the imaginary parts    of $T$, respectively.
	
	Due to the relevance of the numerical radius, numerous generalizations have been  investigated  in the literature, see \cite{Abu,Alo1,Haj,Kit,Sad}. Once remarkable extension is the Davis--Wielandt radius \cite{Dav,Wie}, which is defined by 
	\begin{equation}
		d{w}(T) =  \sup_{x \in \mathcal{H},\|x||=1}  {\sqrt{\left| \langle Tx,x\rangle \right|^2  +\left\| Tx\right\|^4}}.
	\end{equation}
	Obviously, $d{w}(T) \geq 0$ and $d{w}(T)=0$ if and only if $T=0$. However  $d{w}(\cdot)$ fails to satisfy the triangular inequality. However, Bhunia et al.  have  given in \cite{Bhu} the following inequalities
	\begin{align}\label{trian}
		d{w}(T +S ) & \leq  \sqrt{2(d{w}^2(T ) + d{w}^2(S )) + 6\left\| |T|^4 + |S|^4 \right\| } \nonumber\\
		&  \leq 2\sqrt{2}(dw(T ) + dw(S ))
	\end{align} 
	for $T$ and $S$ in $B(\mathcal{H})$. Clearly, $d{w}(T)$ is not a norm on $B (\mathcal{H})$. In spite of this, it has several important properties, see \cite{Zam2,Bhu---r,Bha1}. The next inequality follows directly from the
	definition of $dw(T)$:
	\begin{equation}\label{dw1}
		\max \{ w(T), \left\| |T| \right\|^2  \}\leq d{w}(T) \leq   \sqrt{{w^2(T) + \left\| |T| \right\|^4 }}. 
	\end{equation}
	Recently, based on \eqref{eq1Yam}, Abu-Omar et al. in \cite{Abu}  have presented a generalized definition  of the numerical radius by
	\begin{equation}
		w_N(T)= \sup_{\theta \in \mathbb{R}}N \left( \Re (e^{i\theta}T) \right) ,
	\end{equation}
	with $N(\cdot)$ is an arbitrary norm on $B(\mathcal{H})$. Recall that a norm  $N(\cdot)$ is said to be self--adjoint if $N(T)=N(T^*)$ for every $T\in B(\mathcal{H})$ and algebra norm if $N(TS) \leq N(T)N(S)$ for $T,S \in B(\mathcal{H})$. In \cite{Abu}, it was shown that $w_N(\cdot)$ is a self--adjoint norm and 
	\begin{equation}
		\dfrac{1}{2}\max\{ N(T),N(T^*) \} \leq w_N(T) \leq  \dfrac{1}{2}\left( N(T)+N(T^*)\right).
	\end{equation}
	In particular, if $N (\cdot)$ is self--adjoint, then 
	\begin{equation}
		\dfrac{1}{2} N(T) \leq w_N(T) \leq  N(T).
	\end{equation} 
	Further,  In \cite{Bak} it is shown that
	\begin{equation}
		w_N(T) \geq  \dfrac{1}{2} \sqrt{|N(|T|^2  + |T^*|^2)  - 2w_N(T^2)|}\, ,
	\end{equation}
	when $N(\cdot)$ is a self--adjoint algebra norm.
	
	The reader is referred to \cite{Abu,Bak,Bot} for further details about $w_N(\cdot)$. 
	
Following the idea of the generalized numerical radius introduced in~\cite{Abu}, a similar extension was proposed in~\cite{Alo} for the Davis--Wielandt radius. For clarity, we recall the definition from~\cite{Alo} below:
	\begin{definition}
		Let $T \in B(\mathcal{H})$ and $N(\cdot)$ be a norm, the generalized Davis--Wielandt radius $dw_N(T)$ is given by
		\begin{equation}\label{def*}
			dw_{N}(T) = \sup_{\theta \in \mathbb{R}}  \sqrt{N^2(\Re (e^{i\theta}T))+N^4(\Re(e^{i\theta}|T|))}.
		\end{equation}	
	\end{definition}
	Unlike the generalized numerical radius $ w_N(\cdot) $, which reduces to the classical numerical radius when $ N(\cdot) $ is the usual operator norm, the generalized Davis--Wielandt radius $ dw_N(\cdot) $ does {not} coincide with the classical Davis--Wielandt radius in that case. Still, this definition is useful because it shares many interesting properties with the classical case, as shown in \cite{Alo} and in the present work.
	
	It can be observed that $dw_{N}(T)$ is not a norm and does not satisfy the triangular inequality. Some interesting estimation of $dw_{N}(T)$ are, however, given in \cite{Alo}, in particular, we have
	\begin{equation}\label{equiv}
		\max \{ w_N(T),  w^2_N(|T| ) \}\leq d{w}_N(T) \leq   \sqrt{{w^2_N(T) +  w^4_N(|T| ) }}.
	\end{equation}
	Furthermore, in \cite[Theorem 3]{Alo} the following estimation is provided
	\begin{equation}\label{low}
		dw_{N}(T) \geq    \sqrt{\dfrac{ 1}{4}N^2(T)+\dfrac{ 1}{8}N^4(|T|) }.
	\end{equation} 
	Moreover, the next upper bound is given
	\begin{equation}\label{up}
		dw_{N}(T) \leq   \inf_{\theta \in \mathbb{R}} \sqrt{N^2(\Re(e^{i\theta}T))+N^2(\Im(e^{i\theta}T)) + N^4(\Re(e^{i\theta}|T|))}.
	\end{equation} 
	However, equality \eqref{up} does not hold in general. Indeed,
	a simple counterexample is obtained by choosing the identity operator $T=I$ and the usual norm $N(\cdot) = \| \cdot\|$. Through a straightforward calculation using \eqref{def*}, it can be shown that $dw_{\| \cdot\|}(I)= \sqrt{2}$, and by choosing ${\theta}=\frac{\pi}{2}$ in \eqref{up}, we have 
	\begin{align*}
		dw_{\| \cdot\|}(I) &\leq   \inf_{\theta \in \mathbb{R}} \sqrt{\|\Re(e^{i\theta}I)\|^2 +\|\Im(e^{i\theta}I)\|^2 + \| \Re(e^{i\theta}|I|)\|^4}\\
		& \leq \sqrt{\|\Re(iI)\|^2 +\|\Im(iI)\|^2 + \| \Re(i|I|)\|^4}\\
		& \leq  \sqrt{\|I\|^2 }=1.
	\end{align*}

	This paper sets out to prove sharp bounds for the generalized Davis-Wieland inequality and  illustrate it by  examples. In addition,  we prove an alternative version of the triangular inequality.    
	
	\section{Main Results}    
	We begin this section with a result concerning the equality of the generalized Davis--Wielandt radius of bounded linear operators.
	\begin{proposition}\label{proposit-1}
		Let  $T \in B(\mathcal{H} ) $. We have
		\begin{enumerate}
			\item[(i)] If $dw_{N}(T) = w_N(T)$. Then $T=0$ or $w_N(T)= \max\{N(\Re(iT)),N(\Re(-iT))\}$;
			\item[(ii)] If $dw_{N}(T) = N^2(|T|)$. Then $T$ is anti--Hermitian ($T=-T^*$).
		\end{enumerate}
	\end{proposition}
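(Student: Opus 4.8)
The plan is to exploit the two elementary inequalities recorded in \eqref{equiv}, namely $dw_N(T)\ge w_N(T)$ and $dw_N(T)\ge w_N^2(|T|)$, together with the defining supremum \eqref{def*}, and to ask when the chain of inequalities collapses to an equality. For part (i), assume $dw_N(T)=w_N(T)$. Writing $f(\theta)=N^2(\Re(e^{i\theta}T))$ and $g(\theta)=N^4(\Re(e^{i\theta}|T|))$, we have $dw_N(T)=\sup_\theta\sqrt{f(\theta)+g(\theta)}$ and $w_N(T)=\sup_\theta\sqrt{f(\theta)}$. Since $g\ge 0$, equality forces $\sup_\theta\sqrt{f(\theta)+g(\theta)}=\sup_\theta\sqrt{f(\theta)}$. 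The first step is to argue that this can only happen if $g$ vanishes at (or near, via a limiting argument) the points where $f$ attains its supremum; more precisely, picking a maximizing sequence $\theta_n$ for $f+g$, the value $\sqrt{f(\theta_n)+g(\theta_n)}\to w_N(T)$, while $\sqrt{f(\theta_n)}\le w_N(T)$, so $g(\theta_n)\to 0$, i.e.\ $N(\Re(e^{i\theta_n}|T|))\to 0$. Since $|T|\ge 0$, $\Re(e^{i\theta}|T|)=\cos\theta\,|T|$, so $N(\Re(e^{i\theta}|T|))=|\cos\theta|\,N(|T|)$; hence either $N(|T|)=0$ (forcing $|T|=0$, i.e.\ $T=0$) or $\cos\theta_n\to 0$, so $e^{i\theta_n}\to\pm i$ along the sequence. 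In the latter case $f(\theta_n)=N^2(\Re(e^{i\theta_n}T))\to$ one of $N^2(\Re(iT))$, $N^2(\Re(-iT))$ by continuity of $N$ and of $\theta\mapsto\Re(e^{i\theta}T)$, and this limit must equal $w_N^2(T)$; this gives $w_N(T)=\max\{N(\Re(iT)),N(\Re(-iT))\}$ (the max because $w_N(T)\ge N(\Re(e^{i\theta}T))$ for every $\theta$, in particular for $\theta=\pm\pi/2$).

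For part (ii), assume $dw_N(T)=N^2(|T|)$. Here the relevant lower bound is $dw_N(T)\ge w_N^2(|T|)$; but since $|T|$ is self-adjoint and positive, $w_N(|T|)=\sup_\theta N(\cos\theta\,|T|)=N(|T|)$, so in fact $dw_N(T)\ge w_N^2(|T|)=N^2(|T|)$, and our hypothesis makes this an equality: $dw_N(T)=w_N^2(|T|)$. Now, taking a maximizing sequence $\theta_n$ for $f+g$ and using $g(\theta_n)=|\cos\theta_n|^4 N^4(|T|)\le N^4(|T|)$ together with $f(\theta_n)\ge 0$, equality of $\sqrt{f(\theta_n)+g(\theta_n)}\to N^2(|T|)$ with $g(\theta_n)\le N^4(|T|)$ forces both $f(\theta_n)\to 0$ and $|\cos\theta_n|\to 1$, i.e.\ $e^{i\theta_n}\to\pm 1$. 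Then $f(\theta_n)=N^2(\Re(e^{i\theta_n}T))\to N^2(\Re(\pm T))=N^2(\Re(T))$, so $N(\Re(T))=0$, whence $\Re(T)=\tfrac12(T+T^*)=0$, i.e.\ $T=-T^*$.

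The main obstacle I anticipate is the handling of the suprema: since these are suprema over $\theta\in\mathbb{R}$ (equivalently over the compact circle, as everything is $2\pi$-periodic and $N$ is continuous), the supremum is attained, which lets me replace "maximizing sequence" by "maximizing point" and avoid any limiting subtleties — so I would first note that continuity of $\theta\mapsto N(\Re(e^{i\theta}T))$ and of $\theta\mapsto N(\Re(e^{i\theta}|T|))$ (immediate from the triangle inequality for $N$ and continuity of $e^{i\theta}$) plus $2\pi$-periodicity gives attainment. The one genuinely delicate point is that the maximizer of $f+g$ need not be a maximizer of $f$ alone, so in part (i) one cannot simply say "$g=0$ there"; the correct statement is the one extracted above, that at any maximizer $\theta_0$ of $f+g$ one has $f(\theta_0)+g(\theta_0)=w_N^2(T)\ge f(\theta_0)$, forcing $g(\theta_0)=0$ \emph{and} $f(\theta_0)=w_N^2(T)$, after which the explicit form $g(\theta_0)=|\cos\theta_0|^4N^4(|T|)$ does the rest. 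A final cosmetic step is to record that in part (i) the alternative is genuinely an "or": $T=0$ is one possibility, and if $T\neq 0$ then necessarily $N(|T|)\neq 0$ so the argument lands in the $\cos\theta_0=0$ case.
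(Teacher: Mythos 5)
Your overall setup (writing $\Re(e^{i\theta}|T|)=\cos\theta\,|T|$, so $g(\theta)=\cos^4\theta\,N^4(|T|)$, and noting that the suprema are attained by continuity and $2\pi$-periodicity) is sound, but the key "forcing" step is a non sequitur, and it occurs in both parts. In part (i) you take a maximizer (or maximizing sequence) $\theta_0$ of $f+g$ and claim that $f(\theta_0)+g(\theta_0)=w_N^2(T)\ge f(\theta_0)$ forces $g(\theta_0)=0$. It does not: from $a+b=c$, $a\le c$, $b\ge 0$ one can only conclude $b\ge 0$, which is vacuous; nothing in your argument rules out $f(\theta_0)<w_N^2(T)$ with $g(\theta_0)=w_N^2(T)-f(\theta_0)>0$. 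You correctly flagged the delicate point (a maximizer of $f+g$ need not maximize $f$), but your resolution has the inequality pointing the wrong way. The correct move --- and the one the paper makes --- is to evaluate at a maximizer $\theta_0$ of $f$ alone, which exists by your own continuity/periodicity remark: then
\[
w_N^2(T)+\cos^4\theta_0\,N^4(|T|)=f(\theta_0)+g(\theta_0)\le \sup_{\theta}\bigl(f(\theta)+g(\theta)\bigr)=dw_N^2(T)=w_N^2(T),
\]
so $\cos^4\theta_0\,N^4(|T|)=0$, giving either $N(|T|)=0$ (hence $T=0$) or $\cos\theta_0=0$, i.e. $w_N(T)=N(\Re(iT))$ or $N(\Re(-iT))$, and then the stated maximum exactly as you observe at the end.

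The same invalid inference appears in part (ii): from $f(\theta_n)+g(\theta_n)\to N^4(|T|)$, $g(\theta_n)\le N^4(|T|)$ and $f\ge 0$ you cannot conclude $f(\theta_n)\to 0$ and $|\cos\theta_n|\to 1$; for instance $f(\theta_n)=\tfrac12 N^4(|T|)$ and $\cos^4\theta_n=\tfrac12$ is compatible with everything you wrote. Moreover no maximizer is needed here at all: taking $\theta=0$ in the supremum gives $N^2(\Re(T))+N^4(|T|)\le dw_N^2(T)=N^4(|T|)$, hence $N(\Re(T))=0$ and $T=-T^*$, which is the paper's one-line argument (your detour through $w_N(|T|)=N(|T|)$ is superfluous). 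So both conclusions are correct and the repairs are short, but as written each part rests on an inequality used in the wrong direction.
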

	\begin{proof}
		(i) Take $T \in B(\mathcal{H} )$, and $\theta_0$ such that $w_N(T) =N(\Re ({e^{i\theta_0}T})) $, we have
		\begin{align*}
			dw_{N}^2(T) &= \sup_{\theta \in \mathbb{R}}  \left( {N^2(\Re (e^{i\theta}T))+N^4(\Re(e^{i\theta}|T|))}\right)\\
			& \geq N^2(\Re (e^{i\theta_0}T))+N^4 \left( \dfrac{e^{i\theta_0}|T| +e^{-i\theta_0}|T|}{2} \right)\\
			& = w_N^2(T) + \cos^4(\theta_0) N^4 \left( |T| \right),
		\end{align*}	
		which implies that $\cos^4(\theta_0)N^4 \left( |T| \right) \leq 	dw_{N}^2(T) - w_N^2(T)$.\\
		By assumption, we obtain $\cos^4(\theta_0) N^4 \left( |T| \right) =0$, then either $N^4 \left( |T| \right)=0$ which gives $T=0$ or  $\cos^4(\theta_0)=0$ which implies that $e^{i\theta_0}=\pm i$ and so  $w_N(T) =N(\Re ({iT}))$ or $w_N(T) =N(\Re ({-iT}))$.\\
		(ii) It is easy to see that  
		\begin{align*}
			dw_{N}^2(T) &= \sup_{\theta \in \mathbb{R}}  \left( {N^2(\Re (e^{i\theta}T))+N^4(\Re(e^{i\theta}|T|))}\right)\\
			& \geq N^2(\Re (T))+ N^4 \left( |T| \right) \text{ \quad ( by taking $\theta=0$)},
		\end{align*}	
		since $dw_{N}(T) = N^2(|T|)$, then  $N(\Re (T))=0 $. So we conclude that $T$ is anti--Hermitian.
	\end{proof}
\begin{remark}
The converse of Proposition~\ref{proposit-1} is not true. We illustrate this fact by providing two counterexamples, considering the usual operator norm
\( N(\cdot) = \|\cdot\| \).
	
	\begin{itemize}
		\item[(i)] Let $T=2iI$. We know that  $w_{\Vert \cdot \Vert }(T)$ coincides with the classical numerical radius
		$w(T)$. In this case,
		\[
		w(T)=\max\{\|\Re(iT)\|,\|\Re(-iT)\|\}=2.
		\]
		However,
		\begin{align*}
			dw_{\|\cdot\|}(T)
			&= \sup_{\theta\in\mathbb{R}}
			\sqrt{\|\Re(e^{i\theta}T)\|^2 + \|\Re(e^{i\theta}|T|)\|^4 }  \\
			&\ge \sqrt{\|\Re(T)\|^2 + \|\Re(|T|)\|^4} \quad \text{(by taking $\theta = 0$)}  \\
			&= \|T\|^2 = 4,
		\end{align*}
		and therefore
		\[
		dw_{\|\cdot\|}(T) > w(T).
		\]
		
		\item[(ii)] Let $T=\frac{i}{2}I$. Then $T$ is anti--Hermitian and
		\[
		N^2(|T|)=\|T\|^2=\frac14.
		\]
		Nevertheless,
		\begin{align*}
			dw_{\|\cdot\|}(T)
			&= \sup_{\theta\in\mathbb{R}}
			\sqrt{\|\Re(e^{i\theta}T)\|^2 + \|\Re(e^{i\theta}|T|)\|^4 }\\
			&\ge w(T)
			= \frac12,
		\end{align*}
		which implies
		\[
		dw_{\|\cdot\|}(T) > \|T\|^2.
		\]
	\end{itemize}
\end{remark}

	In the upcoming theorem, we establish a stronger inequality than the one presented in \cite[Theorem 3]{Alo}. We need to observe that, for any two real numbers $a$ and $b$, we can write
	$$\max\{ a, b\}= \dfrac{a+b + |a-b|}{2}.  $$

	\begin{theorem}\label{The2}
		Let $T \in B(\mathcal{H} ) $ and $N(\cdot)$ be a norm. Then  
		\begin{equation}\label{equa1} 
			dw_{N}(T) \geq   \dfrac{ 1}{2} \sqrt{N^2(T)+2N^4(|T|) + 2\left|N^2(\Re (T))+N^4(|T|) -  N^2(\Im (T)) \right|}.
		\end{equation} 
	\end{theorem}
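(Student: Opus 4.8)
The plan is to bound $dw_N^2(T)$ from below by evaluating the supremum in the definition \eqref{def*} at two judiciously chosen angles, and then to combine the two resulting estimates via the elementary identity $\max\{a,b\}=\tfrac12(a+b+|a-b|)$ recorded just above the statement. First I would take $\theta=0$: since $|T|$ is positive, hence self-adjoint, one has $\Re(|T|)=|T|$, and this choice gives $dw_N^2(T)\ge N^2(\Re(T))+N^4(|T|)$. Next I would take $\theta=\pi/2$, i.e. $e^{i\theta}=i$. A direct computation using $T-T^{*}=2i\,\Im(T)$ shows $\Re(iT)=-\Im(T)$, while $\Re(i|T|)=\tfrac12\big(i|T|-i|T|^{*}\big)=0$ because $|T|=|T|^{*}$; hence $dw_N^2(T)\ge N^2(\Im(T))$.

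Writing $a=N^2(\Re(T))+N^4(|T|)$ and $b=N^2(\Im(T))$, the two bounds together with the max identity yield
\[
dw_N^2(T)\ \ge\ \max\{a,b\}\ =\ \frac12\Big(N^2(\Re(T))+N^4(|T|)+N^2(\Im(T))+\big|N^2(\Re(T))+N^4(|T|)-N^2(\Im(T))\big|\Big).
\]
The remaining step is to replace $N^2(\Re(T))+N^2(\Im(T))$ by the cleaner quantity $\tfrac12 N^2(T)$, which is legitimate because that term appears with a positive coefficient. Indeed, from $T=\Re(T)+i\,\Im(T)$ and the triangle inequality we get $N(T)\le N(\Re(T))+N(\Im(T))$, so $N^2(T)\le\big(N(\Re(T))+N(\Im(T))\big)^2\le 2\big(N^2(\Re(T))+N^2(\Im(T))\big)$, i.e. $N^2(\Re(T))+N^2(\Im(T))\ge\tfrac12 N^2(T)$. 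Substituting this into the previous display gives
\[
dw_N^2(T)\ \ge\ \frac14 N^2(T)+\frac12 N^4(|T|)+\frac12\big|N^2(\Re(T))+N^4(|T|)-N^2(\Im(T))\big|,
\]
and taking square roots produces exactly \eqref{equa1}.

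As for difficulty, there is no serious obstacle: the argument is essentially bookkeeping of the supremum at the two special angles, one use of the triangle inequality, and the elementary estimate $(x+y)^2\le 2(x^2+y^2)$. The only points demanding a moment's care are the two small identities $\Re(i|T|)=0$ and $\Re(iT)=-\Im(T)$, both of which rest on $|T|$ being self-adjoint and on the definitions of the real and imaginary parts, and the observation that lower-bounding $N^2(\Re(T))+N^2(\Im(T))$ is harmless precisely because that term carries a nonnegative weight in the expression obtained from the max identity.
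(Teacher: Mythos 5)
Your proposal is correct and follows essentially the same route as the paper: evaluate the supremum at $\theta=0$ and at $\theta=\pm\pi/2$ (the sign is immaterial since $N(\Re(-iT))=N(\Im(T))=N(\Re(iT))$), combine via $\max\{a,b\}=\tfrac12(a+b+|a-b|)$, and lower-bound $N^2(\Re(T))+N^2(\Im(T))$ by $\tfrac12 N^2(T)$ using the triangle inequality together with $(x+y)^2\le 2(x^2+y^2)$. Your explicit verification that $\Re(i|T|)=0$ and your remark on why the substitution is legitimate (the term carries a nonnegative weight) are exactly the points the paper's argument relies on implicitly.
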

	\begin{proof} 
		We know that 
		\begin{align*}
			dw_{N}(T) =& \sup_{\theta \in \mathbb{R}}  \sqrt{N^2(\Re (e^{i\theta}T))+N^4(\Re(e^{i\theta}|T|))}\\
			\geq & \sqrt{N^2(\Re (e^{i\theta}T))+N^4(\Re(e^{i\theta}|T|))} \text{, for $\theta \in \mathbb{R}$. }
		\end{align*} 
		So, by taking respectively $\theta =0 $ and $\theta =-\frac{\pi}{2} $, we get:
		\begin{equation*} 
			dw^2_{N}(T) \geq   {N^2(\Re (T))+N^4(|T|) }
		\end{equation*} 
		and 
		\begin{equation*} 
			dw^2_{N}(T) \geq   {N^2(\Im (T)) }. 
		\end{equation*}
		Using, the fact that 
		\begin{align}\label{eqt1} 
			N^2(\Re (T)) + N^2(\Im (T)) &  \geq \dfrac{1}{2} \left(  N(\Re (T)) + N(\Im (T))\right)^2 \nonumber \\
			& \geq \dfrac{1}{2} N^2(\Re (T) + i \Im (T) ) \nonumber\\
			& \geq \dfrac{1}{2} N^2(\Re (T) + i \Im (T) ) \nonumber \\
			& \geq \dfrac{1}{2} N^2(T),
		\end{align}	
		we have
		\begin{align*}
			dw^2_{N}(T) \geq  & \max \{  N^2(\Re (T))+N^4(|T|) , N^2(\Im (T)) \}  \\
			= &  \dfrac{1}{2}\bigg(  N^2(\Re (T))+ N^2(\Im (T)) + N^4(|T|)  \\
			+ &  \left|N^2(\Re (T))+N^4(|T|) -  N^2(\Im (T)) \right|  \bigg) \\
			\geq  & \dfrac{1}{4} N^2(T) +\dfrac{1}{2} N^4(|T|)+ \dfrac{1}{2} \left|N^2(\Re (T))+N^4(|T|) -  N^2(\Im (T)) \right|,
		\end{align*} 
		from which the result follows.
	\end{proof}
\begin{remark}
	The lower bound in Theorem~\ref{The2}  improves inequality~\eqref{low} stated in \cite[Theorem 3]{Alo}. Moreover, as immediate consequences of Theorem~\ref{The2}, we obtain the following simplified bounds:
	\begin{itemize}
		\item For any norm \( N(\cdot) \),
		\[
		dw_N(T) \geq \frac{1}{2}\sqrt{N^2(T)+2N^4(|T|)}.
		\]
		\item If \( N(\cdot) \) is an algebra norm, then using submultiplicativity we further get
		\[
		dw_N(T) \geq \frac{1}{2}\sqrt{N(T^2 + 2|T|^4)}.
		\]
	\end{itemize}
\end{remark}
	\begin{remark}
	Let $T \in B(\mathcal H)$ and let $N(\cdot)=\|\cdot\|$ be the usual operator norm.
	Assume that the numerical range $W(T)$ is a centered disc. Then
	\[
	\left\|\Re\!\left(e^{i\theta}T\right)\right\| = w(T),
	\qquad \forall\,\theta\in\mathbb R.
	\]
	Therefore
	\begin{align}\label{centred disc numerical range}
		dw_{ \Vert \cdot \Vert}(T) = & \sup_{\theta \in \mathbb{R}}   \sqrt{{\left\| \Re (e^{i\theta}T) \right\|^2  +\left\| \Re(e^{i\theta}|T| ) \right\|^4} }\nonumber \\				
	=	&  \sup_{\theta \in \mathbb{R}}   \sqrt{{w^2(T)   +\left\| \Re(e^{i\theta}|T| ) \right\|^4} }  \nonumber\\
	=	&  \sup_{\theta \in \mathbb{R}}   \sqrt{{w^2(T)   + \cos^4(\theta)\left\| |T|  \right\|^4} }  \nonumber\\
	=&  {\sqrt{w^2(T)  +\left\| T \right\|^4}}.
	\end{align}		
	\end{remark}
The identity \eqref{centred disc numerical range} does not hold, in general, for the classical Davis--Wielandt radius $dw(\cdot)$. 
To illustrate this fact, we propose the following example.
		\begin{example}\label{nilp}
		Let $\| \cdot\|$ be  the usual norm and $T=\left( \begin{matrix}
			0 & 2\\ 
			0&0\\
		\end{matrix}\right) $, then 
		\begin{align*}
			dw^2_{ \Vert \cdot \Vert}(T) = & \sup_{\theta \in \mathbb{R}}  \left( {\left\| \Re (e^{i\theta}T) \right\|^2  +\left\| \Re(e^{i\theta}|T| ) \right\|^4} \right) \\				
			=	&  \left\| \Re (T) \right\|^2  +\left\| |T| \right\|^4  \\
			= &  17.
		\end{align*}
		Also, by \eqref{centred disc numerical range}, we have \begin{align*}
			{{w^2(T)  +\left\| T \right\|^4}}= 1^2+2^4=17.
		\end{align*}
		
		However, the generalized  Davis--Wielandt radius $dw_{ \Vert \cdot \Vert}(\cdot)$ associated with the usual norm doesn't align perfectly with Davis--Wielandt radius $dw(\cdot)$. Indeed,	let  $z=\left( \begin{matrix}
			x\\ 
			y\\
		\end{matrix}\right) $, then we get
		\begin{align*}
			d{w}^2(T) = & \sup_{|z|=1}  \left( {\left\| \langle Tz,z\rangle \right\|^2  +\left\| Tz \right\|^4} \right) \\				
			=	& \sup_{|x|^2 + |y|^2 =1} (4|x|^2|y|^2 + 16|y|^4) \\
			=  & 16,
		\end{align*}
			and therefore 
			$$d{w}(T)  \ne {\sqrt{w^2(T)  +\left\| T \right\|^4}}. $$
	\end{example}
	\begin{remark}
		Let $T \in B(\mathcal{H} ) $ and $N(\cdot)$ a norm, it is easy to see that 
		
		\begin{equation*}
			dw_{N}(T) = \sup_{\theta \in \mathbb{R}}\sqrt{N^2(\Im (e^{i\theta}T))+N^4(\Im(e^{i\theta}|T|))}.
		\end{equation*}
		
	\end{remark}
	
	Now, we establish our second theorem, which offers a lower bound for the generalized Davis--Wielandt radius of bounded operators.
	\begin{theorem}\label{alg-nor-res}
		Let $T \in B(\mathcal{H} ) $ and $N(\cdot)$ be an algebra norm. Then

		\begin{equation}\label{eqb1}
			dw_{N}(T) \geq   \dfrac{ 1}{2} \sqrt{N(|T|^2  + |T^*|^2  ) + 2N^4(|T|)+ 2\left|N^2(\Re (T))+N^4(|T|) -  N^2(\Im (T)) \right|},
		\end{equation} 
		and 
		\begin{equation}\label{eqb2}
			dw_{N}(T) \geq   \dfrac{ 1}{2} \sqrt{N(T^2  + T^{*2}  ) + 2N^4(|T|)+ 2\left|N^2(\Re (T))+N^4(|T|) -  N^2(\Im (T)) \right|}.
		\end{equation}
	\end{theorem}
	\begin{proof} 
		We know that 
		\begin{align*}
			dw_{N}(T) =& \sup_{\theta \in \mathbb{R}}  \sqrt{N^2(\Re (e^{i\theta}T))+N^4(\Re(e^{i\theta}|T|))}\\
			\geq & \sqrt{N^2(\Re (e^{i\theta}T))+N^4(\Re(e^{i\theta}|T|))} \text{, for $\theta \in \mathbb{R}$. }
		\end{align*} 
		So, by taking respectively $\theta =0 $ and $\theta =-\frac{\pi}{2} $, we get:
		
		\begin{equation*} 
			dw^2_{N}(T) \geq   {N^2(\Re (T))+N^4(|T|) }
		\end{equation*} 
		and 
		\begin{equation*} 
			dw^2_{N}(T) \geq   {N^2(\Im (T)) }. 
		\end{equation*}
		Now, if   $N(\cdot)$ is an algebra norm, then  we have 
		\begin{align}\label{eqt2} 
			N^2(\Re (T)) + N^2(\Im (T)) &  \geq N(\Re (T)^2) + N(\Im (T)^2) \nonumber \\
			& \geq  N(\Re (T)^2 + \Im (T)^2) \nonumber\\
			& = \dfrac{1}{2} N(|T|^2  + |T^*|^2  ).
		\end{align}	
		Thus 
		\begin{align*}
			dw^2_{N}(T) \geq  & \max \{  N^2(\Re (T))+N^4(|T|) , N^2(\Im (T)) \}  \\
			= &  \dfrac{1}{2}\left(  N^2(\Re (T))+ N^2(\Im (T)) + N^4(|T|)  \right. \\
			+ & \left. \left|N^2(\Re (T))+N^4(|T|) -  N^2(\Im (T)) \right|  \right) \\
			\geq  & \dfrac{1}{4} N(|T|^2  + |T^*|^2  ) +\dfrac{1}{2} N^4(|T|)+ \dfrac{1}{2} \left|N^2(\Re (T))+N^4(|T|) -  N^2(\Im (T)) \right|,
		\end{align*} 
		as required. \\
		Also, we have 
		\begin{align}\label{eqt*2} 
			N^2(\Re (T)) + N^2(\Im (T)) &  \geq N(\Re (T)^2) + N(\Im (T)^2) \nonumber\\
			& \geq  N(\Re (T)^2 - \Im (T)^2)\nonumber \\
			& = \dfrac{1}{2} N(T^2  + T^{2*}  ).
		\end{align}	
		Thus
		\begin{align*}
			dw^2_{N}(T) \geq \dfrac{1}{4} N(T^2  + T^{*2}  ) +\dfrac{1}{2} N^4(|T|)+ \dfrac{1}{2} \left|N^2(\Re (T))+N^4(|T|) -  N^2(\Im (T)) \right|,
		\end{align*} 
		this completes the proof.
	\end{proof} 
	The following example shows that the lowers bounds mentioned above (\eqref{eqb1}, \eqref{eqb2}) for $dw_{N}(\cdot)$
	are sharps.
	\begin{example}
		If $P$ is a non-null orthogonal projection and $\| \cdot\|$ is the usual norm, then 
		\begin{align*}
			dw^2_{ \Vert \cdot \Vert}(P) = & \sup_{\theta \in \mathbb{R}}  \left( {\left\| \Re (e^{i\theta}P) \right\|^2  +\left\| \Re(e^{i\theta}|P| ) \right\|^4}\right)  \\				
			=	& 2 \\
			= &\dfrac{1}{4} \left(  \left\|   |P|^2  + |P^*|^2 \right\| +  2\left\|  |P|\right\| ^4\right.   \\
			+& \left.  2\left|\left\|  \Re (P)\right\| ^2  +\left\| |P|\right\| ^4  -  \left\|  \Im (P)\right\|^2  \right| \right) \text{ \quad by \eqref{eqb1}} \\
			= & \dfrac{1}{4} \left(   \left\|   P^2  + P^{*2} \right\|   + 2\left\|  |P|\right\| ^4  \right. \\ 
			+& \left. 2\left|\left\|  \Re (P)\right\| ^2  +\left\| |P|\right\| ^4  -  \left\|  \Im (P)\right\|^2  \right|\right)  \text{ \quad by \eqref{eqb2}}. 
		\end{align*} 
	\end{example} 
	

	Since $|T|$ is self--adjoint operator, we obtain the following interesting inequalities.
	\begin{remark}
		Let $T \in B(\mathcal{H} ) $ and $N(\cdot)$ be an algebra norm. it is easy to see from Theorem \ref{alg-nor-res} that

		\begin{equation}
			dw_{N}(T) \geq   \dfrac{ 1}{2} \sqrt{w_N(|T|^2  + |T^*|^2  ) + 2 w_N^4(|T|)}
		\end{equation} 
		and	
		\begin{equation}
			dw_{N}(T) \geq   \dfrac{ 1}{2} \sqrt{w_N(|T|^2  + |T^*|^2   +2|T|^4)}.
		\end{equation} 
		
	\end{remark}
	
	The following estimation for the generalized numerical radius  is an  improvement of that given in  \cite[Theorem 5]{Bak}.
	\begin{theorem}
		Let $T \in B(\mathcal{H} ) $ and $N(\cdot)$ an algebra norm. Then 
		\begin{align*}
			w_N(T) \geq  \dfrac{1}{2} \max \left\lbrace \sqrt{N(|T|^2  + |T^*|^2  )}, \sqrt{N( T^2  + T^{*2}  )}  \right\rbrace. \
		\end{align*} 
	\end{theorem}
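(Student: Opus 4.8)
The plan is to reuse, verbatim, the two norm identities \eqref{eqt2} and \eqref{eqt*2} established inside the proof of the previous theorem, and to feed them a cruder lower estimate for $w_N(T)^2$ than the one the Davis--Wielandt quantity affords. First I would record the two elementary facts that come straight from the definition $w_N(T)=\sup_{\theta\in\mathbb{R}}N(\Re(e^{i\theta}T))$: taking $\theta=0$ gives $w_N(T)\ge N(\Re(T))$, and taking $\theta=-\tfrac{\pi}{2}$ gives $w_N(T)\ge N(\Im(T))$, since $\Re(-iT)=\Im(T)$. Squaring and averaging,
\[
w_N^2(T)\;\ge\;\max\{N^2(\Re(T)),\,N^2(\Im(T))\}\;\ge\;\tfrac12\bigl(N^2(\Re(T))+N^2(\Im(T))\bigr).
\]

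Next, because $N(.)$ is an algebra norm, $N^2(\Re(T))\ge N(\Re(T)^2)$ and $N^2(\Im(T))\ge N(\Im(T)^2)$, so by the triangle inequality
\[
N^2(\Re(T))+N^2(\Im(T))\;\ge\;\max\bigl\{N(\Re(T)^2+\Im(T)^2),\,N(\Re(T)^2-\Im(T)^2)\bigr\}.
\]
A direct expansion of $(T+T^*)^2$ and $(T-T^*)^2$ gives $\Re(T)^2+\Im(T)^2=\tfrac12(|T|^2+|T^*|^2)$ and $\Re(T)^2-\Im(T)^2=\tfrac12(T^2+T^{*2})$ — these are precisely the computations underlying \eqref{eqt2} and \eqref{eqt*2}. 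Substituting, we obtain $2w_N^2(T)\ge \tfrac12 N(|T|^2+|T^*|^2)$ and $2w_N^2(T)\ge \tfrac12 N(T^2+T^{*2})$, i.e. $w_N(T)\ge \tfrac12\sqrt{N(|T|^2+|T^*|^2)}$ and $w_N(T)\ge \tfrac12\sqrt{N(T^2+T^{*2})}$. Taking the maximum of the two right-hand sides yields the claimed inequality.

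There is essentially no hard step here; the only point requiring a little care is the sign bookkeeping in $\Re(-iT)=\Im(T)$ and in the two quadratic identities, and the observation that the algebra-norm hypothesis is exactly what converts the $N^2(\cdot)$ terms into $N(\cdot^2)$ terms so that the triangle inequality can be applied to the sum. To see that this genuinely improves \cite[Theorem 5]{Bak}, I would then remark that the bound obtained here dominates the earlier one, and (optionally) exhibit a concrete operator — e.g. a nonzero nilpotent $2\times2$ matrix or a projection, as in the examples above — where the inequality is strict or sharp.
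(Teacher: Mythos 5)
Your proposal is correct and follows essentially the same route as the paper: both bound $N^2(\Re(T))+N^2(\Im(T))$ by $2w_N^2(T)$ and then invoke the algebra-norm estimates $N^2(\Re(T))\ge N(\Re(T)^2)$, $N^2(\Im(T))\ge N(\Im(T)^2)$ together with the identities $\Re(T)^2+\Im(T)^2=\tfrac12(|T|^2+|T^*|^2)$ and $\Re(T)^2-\Im(T)^2=\tfrac12(T^2+T^{*2})$ (the paper's \eqref{eqt2} and \eqref{eqt*2}). The only cosmetic difference is that you pass through $\max\{N^2(\Re(T)),N^2(\Im(T))\}$ before averaging, whereas the paper bounds each term by $w_N^2(T)$ directly; the conclusion is identical.
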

	\begin{proof} 
		For any $\theta \in \mathbb{R}$, we have 
		\begin{align*}
			2w^2_N(T) & \geq 
			N^2(\Re (e^{i\theta}T)) + N^2(\Im (e^{i\theta}T))\\
			& \geq  \dfrac{1}{2} \max \left\lbrace N(|T|^2  + |T^*|^2  ), N( T^2  + T^{*2}  )  \right\rbrace  \text{ \quad by \eqref{eqt2} and \eqref{eqt*2}}.
		\end{align*} 
	\end{proof}
	
We present next a lower bound for the generalized Davis--Wielandt radius of bounded
operators for any norm on $B(\mathcal{H})$.
	\begin{theorem}\label{The3}
		Let $T \in B(\mathcal{H} ) $ and $N(\cdot)$ be a norm. Then  
		\begin{equation}\label{eq1th3}
			dw_{N}(T) \geq   \dfrac{ 1}{2} \sqrt{ \dfrac{3}{4}N^2(T) +  2N^4(|T|) 
				+ (d_1 + d_2) +2 |m_1-m_2|}, 
		\end{equation} 
		where 
		\begin{align*}
			m_1 =& \max\{ N^2(\Re (T))+N^4(|T|), w^2_N(T)  \}; \\
			m_2 =& \max\{ N^2(\Im (T)) , N^4(|T|)  \}; \\			
			d_1 =& | N^2(\Re (T))+N^4(|T|)- w^2_N(T)   |; \\
			d_2 =& | N^2(\Im (T))- N^4(|T|)   |. 
		\end{align*}
		
	\end{theorem}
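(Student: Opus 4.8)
The plan is to exploit the three elementary pointwise lower bounds for $dw_N^2(T)$ that were already extracted in the proofs of Theorem \ref{The2} and the second theorem, namely (taking $\theta=0$, $\theta=-\tfrac\pi2$, and an arbitrary $\theta$ followed by the supremum over $\theta$):
\begin{align*}
dw_N^2(T) &\geq N^2(\Re(T)) + N^4(|T|), \\
dw_N^2(T) &\geq N^2(\Im(T)), \\
dw_N^2(T) &\geq w_N^2(T).
\end{align*}
The last one holds because $dw_N^2(T)=\sup_\theta\bigl(N^2(\Re(e^{i\theta}T))+N^4(\Re(e^{i\theta}|T|))\bigr)\geq \sup_\theta N^2(\Re(e^{i\theta}T))=w_N^2(T)$. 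In addition, since $|T|$ is self-adjoint we have $\Re(e^{i\theta}|T|)=\cos\theta\,|T|$, so choosing $\theta$ with $\cos^2\theta$ close to $1$ while keeping $\Re(e^{i\theta}T)$ controlled is not needed here; instead the inequality $dw_N^2(T)\geq N^4(|T|)$ follows already from the first bound (or from $\theta=0$ dropping the $N^2(\Re(T))$ term). The idea is to combine these into two ``max of two terms'' each rewritten via $\max\{a,b\}=\tfrac12(a+b+|a-b|)$: one max producing $m_1$ and $d_1$, the other producing $m_2$ and $d_2$.

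Concretely, I would first write
\[
dw_N^2(T)\geq \max\{N^2(\Re(T))+N^4(|T|),\,w_N^2(T)\}=m_1=\tfrac12\bigl(N^2(\Re(T))+N^4(|T|)+w_N^2(T)+d_1\bigr),
\]
and similarly
\[
dw_N^2(T)\geq \max\{N^2(\Im(T)),\,N^4(|T|)\}=m_2=\tfrac12\bigl(N^2(\Im(T))+N^4(|T|)+d_2\bigr).
\]
Then average these two inequalities to get $2\,dw_N^2(T)\geq m_1+m_2$, and separately take $dw_N^2(T)\geq\max\{m_1,m_2\}=\tfrac12(m_1+m_2+|m_1-m_2|)$. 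Adding the ``averaged'' bound and this ``max'' bound (with appropriate weights) yields a term $(m_1+m_2)$ plus a term $|m_1-m_2|$. Expanding $m_1+m_2$ back out gives $N^2(\Re(T))+N^2(\Im(T))+2N^4(|T|)+w_N^2(T)+d_1+d_2$, and now I apply the refinement \eqref{eqt1}, i.e. $N^2(\Re(T))+N^2(\Im(T))\geq\tfrac12 N^2(T)$, together with the trivial $w_N^2(T)\geq\tfrac14 N^2(T)$ coming from $w_N(T)\geq\tfrac12\max\{N(T),N(T^*)\}\geq\tfrac12 N(T)$. These two contribute $\tfrac12 N^2(T)+\tfrac14 N^2(T)=\tfrac34 N^2(T)$, which is exactly the coefficient appearing in \eqref{eq1th3}.

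The main obstacle will be the bookkeeping of the linear combination: I must choose the weights with which I add the ``$2\,dw_N^2\geq m_1+m_2$'' inequality and the ``$dw_N^2\geq\tfrac12(m_1+m_2+|m_1-m_2|)$'' inequality so that the final coefficient of $dw_N^2(T)$ is such that, after dividing, the right-hand side comes out as $\tfrac14\bigl(\tfrac34 N^2(T)+2N^4(|T|)+(d_1+d_2)+2|m_1-m_2|\bigr)$ under the square root — i.e. so that $4\,dw_N^2(T)$ dominates that bracket. Writing $4\,dw_N^2(T)=2\,dw_N^2(T)+2\,dw_N^2(T)\geq (m_1+m_2)+(m_1+m_2+|m_1-m_2|)=2(m_1+m_2)+|m_1-m_2|$ does the job once one checks $2(m_1+m_2)\geq \tfrac34 N^2(T)+2N^4(|T|)+d_1+d_2$, which reduces to the two estimates from \eqref{eqt1} and $w_N^2(T)\geq\tfrac14N^2(T)$ noted above, plus the fact that the remaining leftover nonnegative terms ($N^2(\Re(T))$, $N^2(\Im(T))$, extra copies of $N^4(|T|)$ and $w_N^2(T)$) can simply be discarded. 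Taking square roots then gives \eqref{eq1th3}. I expect no further subtlety beyond carefully tracking these coefficients and confirming every discarded term is genuinely $\geq 0$.
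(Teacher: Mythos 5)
Your setup is exactly the paper's: the elementary bounds $dw_N^2(T)\geq N^2(\Re (T))+N^4(|T|)$, $dw_N^2(T)\geq N^2(\Im (T))$, $dw_N^2(T)\geq w_N^2(T)$ and $dw_N^2(T)\geq N^4(|T|)$, hence $dw_N^2(T)\geq\max\{m_1,m_2\}$, combined with $\max\{a,b\}=\tfrac12(a+b+|a-b|)$, the estimate \eqref{eqt1} and $w_N(T)\geq\tfrac12 N(T)$. The one step that fails is the final weighting, precisely the point you flagged as the main obstacle. Your concrete choice
\[
4\,dw_N^2(T)\geq (m_1+m_2)+\bigl(m_1+m_2+|m_1-m_2|\bigr)=2(m_1+m_2)+|m_1-m_2|
\]
produces only one copy of $|m_1-m_2|$, whereas \eqref{eq1th3} needs two. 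Together with your (correct) estimate $2(m_1+m_2)\geq \tfrac34 N^2(T)+2N^4(|T|)+d_1+d_2$, what you actually prove is
\[
dw_N(T)\geq \frac12\sqrt{\tfrac34 N^2(T)+2N^4(|T|)+(d_1+d_2)+|m_1-m_2|},
\]
which is strictly weaker than the theorem whenever $m_1\neq m_2$. The missing $|m_1-m_2|$ cannot be absorbed by the ``leftover nonnegative terms'' you mention: in the derivation of $2(m_1+m_2)\geq \tfrac34 N^2(T)+2N^4(|T|)+d_1+d_2$ nothing is discarded, and in Example \ref{nilp} both \eqref{eqt1} and $w_N^2(T)\geq\tfrac14 N^2(T)$ hold with equality, so there is no slack at all while $|m_1-m_2|=1$.

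The repair is immediate and is exactly the paper's argument: do not mix in the weaker ``averaged'' inequality $2\,dw_N^2(T)\geq m_1+m_2$. Use only $dw_N^2(T)\geq\max\{m_1,m_2\}=\tfrac12(m_1+m_2)+\tfrac12|m_1-m_2|$, i.e. $4\,dw_N^2(T)\geq 2(m_1+m_2)+2|m_1-m_2|$, and then insert your bound for $2(m_1+m_2)$; taking square roots gives \eqref{eq1th3} verbatim. With that single change of weights your proof coincides with the paper's.
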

	\begin{proof} 
		We know that 
		\begin{align*}
			dw_{N}(T) =&  \sup_{\theta \in \mathbb{R}}\sqrt{N^2(\Re (e^{i\theta}T))+N^4(\Re(e^{i\theta}|T|))}\\
			\geq & \max \left\lbrace \sup_{\theta \in \mathbb{R}}\sqrt{  N^2(\Re (e^{i\theta}T))}, \sup_{\theta \in \mathbb{R}}\sqrt{  N^4(\Re(e^{i\theta}|T|))}\right\rbrace    \\
			=& \max \left\lbrace w_N(T),  N^2(|T|)\right\rbrace,
		\end{align*} 
		and, we see that 	
		\begin{equation*} 
			dw^2_{N}(T) \geq   {N^2(\Re (T))+N^4(|T|) }
		\end{equation*} 
		and 
		\begin{equation*} 
			dw^2_{N}(T) \geq   {N^2(\Im (T)) }. 
		\end{equation*}
		Using, the well known inequality $w_N(T) \geq \dfrac{1}{2} N(T) $, we obtain
		\begin{align}
			dw^2_{N}(T) \geq  & \max \{ m_1 , m_2 \}  \nonumber \\
			= &  \dfrac{1}{2}( m_1+ m_2 +|m_1-m_2|)  \nonumber \\
			= &  \dfrac{1}{4}\left( N^2(\Re (T)) + N^4(|T|) + w^2_N(T) + d_1       \right) \nonumber \\
			+& \dfrac{1}{4}\left( N^2(\Im (T))  +N^4(|T|) + d_2       \right) + \dfrac{1}{2} |m_1-m_2| \nonumber \\
			=& \dfrac{1}{4}\left( N^2(\Re (T)) + N^2(\Im (T)) + w^2_N(T)  + (d_1 + d_2)\right) \nonumber \\
			+& \dfrac{1}{2} \left( |m_1-m_2|
			+ N^4(|T|) \right) \label{eqp2}\\
			\geq & \dfrac{1}{4} \left( \dfrac{1}{2}N^2(T) +  w^2_N(T)  + (d_1 + d_2) \right)   \text{ \quad by \eqref{eqt1}}\nonumber\\ 
			+ &  \dfrac{1}{2} \left( N^4(|T|)
			+  |m_1-m_2| \right) \nonumber \\
			\geq & \dfrac{1}{4} \left( \dfrac{1}{2}N^2(T) +  \dfrac{1}{4}N^2(T)  + (d_1 + d_2) \right)   \nonumber\\ 
			+ &  \dfrac{1}{2} \left(  N^4(|T|)
			+  |m_1-m_2|\right)  \nonumber \\
			\geq &  \dfrac{1}{4} \left( \dfrac{3}{4}N^2(T) +  2N^4(|T|) 
			+ (d_1 + d_2) +2 |m_1-m_2|\right), \nonumber 
		\end{align}
		this completes the proof.
	\end{proof}
	\begin{theorem}
		Let $T \in B(\mathcal{H} ) $ and $N(\cdot)$ be an algebra norm. Then  
		\begin{equation}\label{eq+1}
			dw^2_{N}(T) 
			\geq  \dfrac{1}{2}  \sqrt{\dfrac{ 1}{2}N(|T|^2 + |T^*|^2 ) + w^2_N(T) +  2N^4(|T|) 
				+ (d_1 + d_2) +2 |m_1-m_2|}, 
		\end{equation}
		and 
		\begin{equation}\label{eq+2}
			dw^2_{N}(T) 
			\geq  \dfrac{1}{2}  \sqrt{\dfrac{ 1}{2}N(T^2 + T^{*2} ) + w^2_N(T) +  2N^4(|T|) 
				+ (d_1 + d_2) +2 |m_1-m_2|}, 
		\end{equation}
		
		where $m_1$, $m_2$, $d_1$ and $d_2$ are given in Theorem \ref{The3}.  
	\end{theorem}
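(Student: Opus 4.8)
The plan is to follow the proof of Theorem~\ref{The3} verbatim up to its intermediate estimate \eqref{eqp2}, and only then branch off. First I would extract from the definition \eqref{def*} the same three elementary inequalities used there: taking $\theta=0$ gives $dw_N^2(T)\geq N^2(\Re(T))+N^4(|T|)$; taking $\theta=-\tfrac{\pi}{2}$ gives $dw_N^2(T)\geq N^2(\Im(T))$; and distributing the supremum over the two summands gives $dw_N^2(T)\geq\max\{w_N(T),N^2(|T|)\}$, hence both $dw_N^2(T)\geq w_N^2(T)$ and $dw_N^2(T)\geq N^4(|T|)$. Applying the identity $\max\{a,b\}=\tfrac12(a+b+|a-b|)$ three times — once to combine $N^2(\Re(T))+N^4(|T|)$ with $w_N^2(T)$ into $m_1$, once to combine $N^2(\Im(T))$ with $N^4(|T|)$ into $m_2$, and once more to take $\max\{m_1,m_2\}$ — reproduces exactly inequality \eqref{eqp2}, i.e.
\begin{align*}
dw_N^2(T)\;\geq\;&\tfrac14\bigl(N^2(\Re(T))+N^2(\Im(T))+w_N^2(T)+(d_1+d_2)\bigr)\\
&{}+\tfrac12\bigl(|m_1-m_2|+N^4(|T|)\bigr).
\end{align*}
The key point to verify here is that this derivation uses only the triangle inequality and positivity of $N(\cdot)$, never the algebra property, so \eqref{eqp2} may be reused unchanged.

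The two asserted bounds are then obtained by inserting, at this last step, the two algebra-norm estimates already proved in the excerpt. For \eqref{eq+1} I would replace $N^2(\Re(T))+N^2(\Im(T))$ by its lower bound $\tfrac12 N(|T|^2+|T^*|^2)$ from \eqref{eqt2}; for \eqref{eq+2} I would instead use the lower bound $\tfrac12 N(T^2+T^{*2})$ from \eqref{eqt*2}. In the first case this yields
\[
dw_N^2(T)\;\geq\;\tfrac14\Bigl(\tfrac12 N(|T|^2+|T^*|^2)+w_N^2(T)+2N^4(|T|)+(d_1+d_2)+2|m_1-m_2|\Bigr)
\]
after clearing denominators, and taking square roots gives \eqref{eq+1}; the case of \eqref{eq+2} is identical with $N(|T|^2+|T^*|^2)$ replaced by $N(T^2+T^{*2})$. (For consistency with Theorem~\ref{The3} the left-hand sides in \eqref{eq+1} and \eqref{eq+2} should presumably read $dw_N(T)$ rather than $dw_N^2(T)$, so that the homogeneity in $T$ matches.)

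I do not expect a genuine obstacle: the theorem is a structural refinement of Theorem~\ref{The3}, obtained by trading the input inequality \eqref{eqt1} for the sharper \eqref{eqt2} and \eqref{eqt*2}, both of which require $N(\cdot)$ to be an algebra norm, as assumed. The only places demanding a little care are the bookkeeping when the coefficients $\tfrac14$ and $\tfrac12$ are merged — in particular, making sure the $N^4(|T|)$ contributed through $m_2$ and the explicit $\tfrac12 N^4(|T|)$ in \eqref{eqp2} combine into precisely the $2N^4(|T|)$ inside the radical, which is automatic from the definitions of $m_2$ and $d_2$ — and the observation, already noted above, that \eqref{eqp2} does not secretly depend on the algebra hypothesis.
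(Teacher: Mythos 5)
Your proposal is correct and follows essentially the same route as the paper: the paper likewise invokes the intermediate estimate \eqref{eqp2} from the proof of Theorem \ref{The3} (valid for any norm) and then replaces $N^2(\Re(T))+N^2(\Im(T))$ by the algebra-norm bounds \eqref{eqt2} and \eqref{eqt*2} to obtain \eqref{eq+1} and \eqref{eq+2}, respectively. Your remark that the left-hand sides should read $dw_{N}(T)$ rather than $dw^2_{N}(T)$ is also consistent with the paper's own computation.
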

	
	\begin{proof}
		It follows from \eqref{eqp2} that
		\begin{align*}
			dw^2_{N}(T) \geq   & \dfrac{1}{4}( N^2(\Re (T)) + N^2(\Im (T)) + w^2_N(T)  + (d_1 + d_2)) \nonumber \\
			& + \dfrac{1}{2} \left( |m_1-m_2|
			+ N^4(|T|) \right) \\
			& \geq \dfrac{1}{4}\left(  \dfrac{ 1}{2}N(|T|^2 + |T^*|^2 ) + w^2_N(T)  + (d_1 + d_2)\right)  \text{ \quad \quad by \eqref{eqt2}}  \\
			& +   \dfrac{1}{2} \left( |m_1-m_2| 
			+ N^4(|T|) \right),
		\end{align*}
		which gives \eqref{eq+1}.\\
		Similarly, using \eqref{eqt*2}, we prove \eqref{eq+2}.
	\end{proof}

	
	\begin{remark}
		Example \ref{nilp}  shows again the sharpness of estimations in \eqref{eq1th3},  \eqref{eq+1} and \eqref{eq+2} as follows, 
		\begin{align*}
			m_1 =& \max\{ \| \Re (T)\|^2+\||T|\|^4, w^2(T)  \} =17,  \\
			m_2 =& \max\{ \|\Im (T)\|^2 ,\||T|\|^4  \}= 16, \\			
			d_1 =& | \|\Re (T)\|^2 +\||T|\|^4 - w^2(T)   | = 16, \\
			d_2 =& | \|\Im (T)\|^2- \||T|\|^4   | = 15. 
		\end{align*}
		Therefore
		\begin{align*}
			&\dfrac{ 1}{2} \sqrt{ \dfrac{3}{4}N^2(T) +  2N^4(|T|) 
				+ (d_1 + d_2) +2 |m_1-m_2|} \text{ \quad given in \eqref{eq1th3}} \\
			& = \dfrac{ 1}{2} \sqrt{\dfrac{ 1}{2}N(|T|^2 + |T^*|^2 ) + w^2_N(T) +  2N^4(|T|) 
				+ (d_1 + d_2) +2 |m_1-m_2|}     \\
			& \text{ \quad \quad\quad\quad\quad\quad \quad \quad\quad\quad\quad\quad \quad \quad\quad\quad\quad\quad \quad \quad given in \eqref{eq+1}}\\
			&  =\dfrac{1}{2}  \sqrt{\dfrac{ 1}{2}N(T^2 + T^{*2} ) + w^2_N(T) +  2N^4(|T|) 
				+ (d_1 + d_2) +2 |m_1-m_2|} \\
			& \text{ \quad \quad\quad\quad\quad\quad \quad \quad\quad\quad\quad\quad \quad \quad\quad\quad\quad\quad \quad \quad given in \eqref{eq+2}}\\
			& = \sqrt{17}.
		\end{align*}
	\end{remark}
	We note here that, if $N(\cdot)$ is self--adjoint norm so $w_N(T) \leq N(T)$ (see, \cite[Theorem 2]{Abu}), and if we consider \eqref{eqp2}, we obtain 
	\begin{align}
		dw^2_{N}(T) &\geq   \dfrac{1}{4}\left( N^2(\Re (T)) + N^2(\Im (T)) +w^2_N(T)  + (d_1 + d_2) \right)  \nonumber \\
		&+ \dfrac{1}{2} \left( |m_1-m_2|
		+ N^4(|T|)\right)  \nonumber \\
		&\geq  \dfrac{1}{4} \left( \dfrac{1}{2}N^2(T) +w^2_N(T)  +  (d_1 + d_2) \right)  \nonumber\\
		& + \dfrac{1}{2} \left(  |m_1-m_2|+ N^4(|T|) \right) \nonumber \\
		&\geq  \dfrac{1}{4} \left( \dfrac{1}{2}w^2_N(T) +w^2_N(T)  +  (d_1 + d_2) \right)  \nonumber\\
		& + \dfrac{1}{2} \left(  |m_1-m_2|+ N^4(|T|) \right) \nonumber \\
		&\geq   \dfrac{1}{4} \left(  \dfrac{3}{2}w^2_N(T)  +  2N^4(|T|) 
		+ (d_1 + d_2) +2 |m_1-m_2|\right). \nonumber \\
	\end{align} 
	So, we have the following result

	\begin{theorem}\label{The4}
		Let $T \in B(\mathcal{H} ) $ and $N(\cdot)$ be a self--adjoint norm. Then  
		\begin{equation}
			dw_{N}(T) \geq   \dfrac{ 1}{2} \sqrt{ \dfrac{3}{2}w^2_N(T) +  2N^4(|T|) 
				+ (d_1 + d_2) +2 |m_1-m_2|}, 
		\end{equation} 
		where $m_1$, $m_2$, $d_1$ and $d_2$ are given in Theorem \ref{The3}.  
	\end{theorem}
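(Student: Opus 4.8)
The plan is to reuse the intermediate estimate \eqref{eqp2} obtained inside the proof of Theorem \ref{The3} and then exploit the extra hypothesis that $N(.)$ is self-adjoint. Recall that \eqref{eqp2} reads
\begin{align*}
	dw^2_{N}(T) \geq & \; \dfrac{1}{4}\left( N^2(\Re (T)) + N^2(\Im (T)) + w^2_N(T) + (d_1 + d_2)\right) \\
	& + \dfrac{1}{2}\left( |m_1-m_2| + N^4(|T|) \right),
\end{align*}
with $m_1,m_2,d_1,d_2$ as in Theorem \ref{The3}. This inequality is valid for an arbitrary norm, so in particular for a self-adjoint one; the only thing that changes is the estimate we plug in for $N^2(\Re(T))+N^2(\Im(T))$.

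The first step is to invoke \eqref{eqt1}, namely $N^2(\Re(T))+N^2(\Im(T)) \geq \tfrac12 N^2(T)$, which holds for any norm. The second step — and this is where the self-adjointness hypothesis enters — is to recall the two-sided bound $\tfrac12 N(T) \leq w_N(T) \leq N(T)$ that holds when $N(.)$ is self-adjoint (stated in the Introduction); in particular $N^2(T) \geq w_N^2(T)$, hence $\tfrac12 N^2(T) \geq \tfrac12 w_N^2(T)$. Substituting these two bounds into the displayed inequality gives
\begin{align*}
	dw^2_{N}(T) \geq & \; \dfrac{1}{4}\left( \dfrac12 w^2_N(T) + w^2_N(T) + (d_1 + d_2)\right) + \dfrac{1}{2}\left( |m_1-m_2| + N^4(|T|) \right) \\
	= & \; \dfrac{1}{4}\left( \dfrac32 w^2_N(T) + 2N^4(|T|) + (d_1+d_2) + 2|m_1-m_2| \right).
\end{align*}
Taking square roots yields exactly the claimed inequality.

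I do not expect a genuine obstacle here: the statement is a corollary of \eqref{eqp2}, and the argument is the same three-line chain already displayed in the paragraph preceding Theorem \ref{The4} (which replaces, step by step, $N^2(\Re(T))+N^2(\Im(T))$ by $\tfrac12 N^2(T)$, then $\tfrac12 N^2(T)$ by $\tfrac12 w_N^2(T)$). The only point deserving care is making explicit that the inequality $w_N(T)\le N(T)$ requires $N(.)$ to be self-adjoint — this is precisely why Theorem \ref{The4} carries that hypothesis while Theorem \ref{The3} does not — and that all other ingredients (\eqref{eqp2} and \eqref{eqt1}) are hypothesis-free. One may also remark, as the authors do via the subsequent Remark, that Example \ref{nilp} shows the bound is sharp.
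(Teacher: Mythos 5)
Your proof is correct and follows exactly the paper's own route: starting from the intermediate bound \eqref{eqp2}, applying \eqref{eqt1} to get $\tfrac12 N^2(T)$, and then using $w_N(T)\le N(T)$ (valid since $N(.)$ is self-adjoint) to pass to $\tfrac12 w_N^2(T)$, which is precisely the chain displayed in the paragraph preceding Theorem \ref{The4}. Nothing is missing.
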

	In general, the generalized Davis--Wielandt radius fails to satisfy the triangular inequality. This result motivates the development of an alternative operator sum inequality, which is presented in the following  theorem.
	\begin{theorem}
		Let $T,S \in B(\mathcal{H} ) $ and $N(\cdot)$ be a norm. Then	
		\begin{align*}
			dw_{N}(T + S ) & \leq  \sqrt{2(dw_{N}^2(T ) + dw_{N}^2(S )) + 6( N^4(|T|) + N^4(|S|))}\\
			&  \leq 2\sqrt{2}\sqrt{dw_{N}^2(T ) + dw_{N}^2(S )}\\
			&  \leq 2\sqrt{2}(dw_{N}(T ) + dw_{N}(S ))
		\end{align*} 
	\end{theorem}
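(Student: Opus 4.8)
The plan is to reduce everything to the definition \eqref{def*} and the Yamazaki-type characterization, then bound each of the two terms $N^2(\Re(e^{i\theta}(T+S)))$ and $N^4(\Re(e^{i\theta}|T+S|))$ separately. For the first term, for every $\theta\in\mathbb{R}$ we have $\Re(e^{i\theta}(T+S))=\Re(e^{i\theta}T)+\Re(e^{i\theta}S)$, so by the triangle inequality for $N(.)$ and the elementary inequality $(a+b)^2\le 2a^2+2b^2$,
\[
N^2(\Re(e^{i\theta}(T+S)))\le 2N^2(\Re(e^{i\theta}T))+2N^2(\Re(e^{i\theta}S)).
\]
The delicate point is the second term, because $|T+S|\ne|T|+|S|$ in general, so one cannot split $\Re(e^{i\theta}|T+S|)$ additively. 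I would instead use \eqref{equiv} (or directly the definition) to get $N^4(|T+S|)\le dw_N^2(T+S)$ is the wrong direction; what is needed is an \emph{upper} bound on $N^4(\Re(e^{i\theta}|T+S|))$ in terms of $N^4(|T|)$ and $N^4(|S|)$. Here one should exploit that $\big|\,|T+S|\,\big|$-type estimates are not available, so the natural route is: $N(\Re(e^{i\theta}|T+S|))\le N(|T+S|)\le N(|T|)+N(|S|)+(\text{something})$ — but $|T+S|\le|T|+|S|$ fails, so instead bound $N^4(\Re(e^{i\theta}|T+S|))\le N^4(|T+S|)$ and then relate $N^4(|T+S|)$ to $N^4(|T|)+N^4(|S|)$ via $\||T+S|^2\|$-style manipulations, i.e. $|T+S|^2=(T+S)^*(T+S)$ and a $2(a+b)$ bound again.

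Concretely, the cleanest path mirrors the proof of \eqref{trian} in \cite{Bhu}: since $dw_N^2(T)\ge N^4(|T|)$ and $dw_N^2(T)\ge N^2(\Re(e^{i\theta}T))$ for every $\theta$, one writes, for the optimal $\theta$ realizing $dw_N(T+S)$,
\[
dw_N^2(T+S)=N^2(\Re(e^{i\theta}(T+S)))+N^4(\Re(e^{i\theta}|T+S|)),
\]
bounds the first summand by $2N^2(\Re(e^{i\theta}T))+2N^2(\Re(e^{i\theta}S))\le 2w_N^2(T)+2w_N^2(S)\le 2dw_N^2(T)+2dw_N^2(S)$, and bounds the second summand by estimating $N^2(|T+S|^2)$: using $|T+S|^2\le 2|T|^2+2|S|^2+\text{cross terms}$ is problematic since $|T+S|^2\neq |T|^2+|S|^2$, so I would instead expand $(T+S)^*(T+S)=|T|^2+|S|^2+T^*S+S^*T$ and use $N(|T+S|^2)\le N(|T|^2)+N(|S|^2)+N(T^*S)+N(S^*T)$, then the algebra-norm-free bound $N(T^*S)\le\tfrac12(N(|T|^2)+N(|S|^2))$ is \emph{not} available without an algebra norm — so in fact one should bound $N^4(\Re(e^{i\theta}|T+S|))\le N^4(|T+S|)$ and use the crude $N(|T+S|)\le N(T+S)\cdot\|\cdot\|$-type estimate replaced by: $\Re(e^{i\theta}|T+S|)$ has norm at most $N(|T+S|)$, and then $|T+S|^2$ gives via $N(|T+S|)^2=N(|T+S|)^2$... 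This is where the $6(N^4(|T|)+N^4(|S|))$ term enters: following \cite{Bhu}, one gets $N^4(|T+S|)\le 2N^4(|T|)+2N^4(|S|)+$ a term absorbed into the $dw_N^2$ parts, leaving the stated constant $6$.

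After the first inequality $dw_N(T+S)\le\sqrt{2(dw_N^2(T)+dw_N^2(S))+6(N^4(|T|)+N^4(|S|))}$ is in hand, the second follows immediately from $N^4(|T|)\le dw_N^2(T)$ and $N^4(|S|)\le dw_N^2(S)$ (consequence of \eqref{equiv}), giving $6(N^4(|T|)+N^4(|S|))\le 6(dw_N^2(T)+dw_N^2(S))$ and hence the bound $\sqrt{8(dw_N^2(T)+dw_N^2(S))}=2\sqrt2\sqrt{dw_N^2(T)+dw_N^2(S)}$. The third inequality is just $\sqrt{a^2+b^2}\le a+b$ for $a,b\ge 0$ applied with $a=dw_N(T)$, $b=dw_N(S)$. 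The main obstacle is getting the constant $6$ right in the bound for $N^4(\Re(e^{i\theta}|T+S|))$ without assuming $N$ is an algebra norm; I expect this requires carefully tracking which pieces of $N^2(|T+S|^2)$ can be absorbed into $dw_N^2(T)+dw_N^2(S)$ versus which must be paid for explicitly, exactly as in the argument for \eqref{trian}.
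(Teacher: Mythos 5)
Your handling of the first summand $N^2(\Re(e^{i\theta}(T+S)))$ and of the second and third links of the chain is fine (the second follows from $N^2(|T|)\le dw_N(T)$, the third from $\sqrt{a^2+b^2}\le a+b$), but the first inequality --- which is the substance of the theorem --- is never actually established. You correctly identify the obstruction: the term $N^4(\Re(e^{i\theta}|T+S|))$ cannot be split additively because $|T+S|\neq|T|+|S|$ in general, and none of the routes you sketch (expanding $(T+S)^*(T+S)$, bounding $N(T^*S)$, absorbing cross terms) is carried to completion; the write-up ends with ``I expect this requires carefully tracking\dots'', which is a statement of intent rather than an argument. As it stands there is no derivation of the constant $6$, nor of any finite constant, for the contribution of $N^4(\Re(e^{i\theta}|T+S|))$.

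For comparison, the paper's proof does not confront this difficulty at all: it starts from the quantity $N^2(\Re(e^{i\theta}T+e^{i\theta}S))+N^4(\Re(e^{i\theta}|T|+e^{i\theta}|S|))$, i.e.\ it silently replaces $|T+S|$ by $|T|+|S|$ inside the second term, and then applies the triangle inequality together with $(a+b)^2\le 2(a^2+b^2)$ and $(a+b)^4\le 8(a^4+b^4)$, splitting $8=2+6$ so that two of the eight copies are absorbed into $dw_N^2(T)+dw_N^2(S)$ and the remaining six appear explicitly. Since the supremum over $\theta$ of that modified quantity is not $dw_N^2(T+S)$ as defined in \eqref{def*}, the step you balk at is exactly the step the paper takes without justification; your hesitation is well founded, but the net result is that your proposal contains no complete proof of the first inequality. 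To close the gap you would need either a bound of the form $N(\Re(e^{i\theta}|T+S|))\le N(\Re(e^{i\theta}|T|))+N(\Re(e^{i\theta}|S|))$ (which fails in general) or a genuinely different estimate for $N^4(|T+S|)$ in terms of $dw_N^2(T)$, $dw_N^2(S)$, $N^4(|T|)$ and $N^4(|S|)$.
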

	\begin{proof}
		For any $\theta \in \mathbb{R}$, we have 
		\begin{align*}
			& N^2(\Re (e^{i\theta}T + e^{i\theta}S ))+N^4(\Re(e^{i\theta}|T| + e^{i\theta}|S|))\\
			& \leq  \left(  N(\Re (e^{i\theta}T )) + N(\Re (e^{i\theta}S )) \right)^2 +   \left( N(\Re(e^{i\theta}|T|)) + N(\Re(e^{i\theta}|S|))\right)^4\\
			& \leq  2\left( N^2(\Re (e^{i\theta}T )) + N^2(\Re (e^{i\theta}S )) \right) + 4 \left( N^2(\Re(e^{i\theta}|T|)) + N^2(\Re(e^{i\theta}|S|))\right)^2\\
			&\leq  2\left( N^2(\Re (e^{i\theta}T )) + N^2(\Re (e^{i\theta}S )) \right) + 8 \left( N^4(\Re(e^{i\theta}|T|)) + N^4(\Re(e^{i\theta}|S|))\right)\\
			& \leq 2\left( N^2(\Re (e^{i\theta}T )) + N^4(\Re(e^{i\theta}|T|)) \right)  + 2\left( N^2(\Re (e^{i\theta}S )) + N^4(\Re(e^{i\theta}|S|)) \right)\\
			&+ 6 \left( N^4(\Re(e^{i\theta}|T|)) + N^4(\Re(e^{i\theta}|S|))\right)\\
			& \leq  2(dw_{N}^2(T ) + dw_{N}^2(S )) + 6( N^4(|T|) + N^4(|S|))\\
			&  \leq 8(dw_{N}^2(T ) + dw_{N}^2(S ))\\
			&  \leq 8(dw_{N}(T ) + dw_{N}(S ))^2.
		\end{align*} 
		Taking supremum over all $\theta \in \mathbb{R}$, we get the desire inequalities.
	\end{proof}
	\section*{Acknowledgements}
	The authors gratefully acknowledge the financial support from the Laboratory of Fundamental and Applicable Mathematics of Oran (LMFAO) and  the Algerian research project: PRFU, No: C00L03ES310120220003 and PRFU, No: C00L03ES310120220001 (D.G.R.S.D.T).
	
	\textbf{Conflict of interest }The authors declare that they have no conflict of interest.


\end{document}